\documentclass[11pt,twoside,reqno]{amsart}
\allowdisplaybreaks
\usepackage{amsmath,amstext,amssymb,epsfig,multicol,enumerate}
\usepackage{graphicx}
\usepackage{mathrsfs}
\textwidth 18 cm
\textheight 23 cm
\oddsidemargin 0.2cm
\evensidemargin 1.2cm
\calclayout
\setcounter{page}{1}
\makeatletter
\renewcommand{\@seccntformat}[1]{\bf\csname the#1\endcsname.}
\renewcommand{\section}{\@startsection{section}{1}
	\z@{.7\linespacing\@plus\linespacing}{.5\linespacing}
	{\normalfont\upshape\bfseries\centering}}
\renewcommand{\@biblabel}[1]{\@ifnotempty{#1}{#1.}}
\makeatother
\theoremstyle{plain}
\newtheorem{thm}{Theorem}[section]
\newtheorem{lem}[thm]{Lemma}
\newtheorem{prop}[thm]{Proposition}

\theoremstyle{definition}

\usepackage{cancel}
\usepackage[parfill]{parskip}
\usepackage[german]{varioref}
\usepackage[all]{xy}
\usepackage{color}

\def\A{{\mathcal A}}

\def \>{\succ}
\def \<{\prec}

\def\A{{\mathbb A}}
\def\C{{\mathbb C}}

\DeclareMathOperator{\Aut}{Aut}

\DeclareMathOperator{\id}{id}

\begin{document}	
\title[Ahmed Zahari ABDOU DAMDJI\textsuperscript{1}, Bouzid Mosbahi\textsuperscript{2}]{Compatible Pairs of Low-Dimensional Associative Algebras and Their Invariants}
	\author{Ahmed Zahari ABDOU DAMDJI\textsuperscript{1},  Bouzid Mosbahi\textsuperscript{2}}
\address{\textsuperscript{1}IRIMAS-Department of Mathematics, Faculty of Sciences, University of Haute Alsace, Mulhouse, France}
         \address{\textsuperscript{2}Department of Mathematics, Faculty of Sciences, University of Sfax, Sfax, Tunisia}
    
\email{\textsuperscript{1}abdou-damdji.ahmed-zahari@uha.fr}
	\email{\textsuperscript{2}mosbahi.bouzid.etud@fss.usf.tn}
         
	
	\keywords{Compatible associative algebras, low-dimensional algebras, algebraic invariants, derivations, centroids, automorphisms, Rota-Baxter operators, Nijenhuis operators, averaging operators, Reynolds operators, quasi-derivations, generalized derivations.}
	\subjclass[2020]{16W25, 16W10, 16T10, 16S80, 17A36, 17D25, 17B40, 16W99}
	
	\date{\today}
\begin{abstract}
A \textit{compatible associative algebra} is a vector space endowed with two associative multiplication operations that satisfy a natural compatibility condition. In this paper, we investigate and classify compatible pairs of associative algebras of complex dimension less than four. Alongside these classifications, we systematically compute and analyze various algebraic invariants associated with them, including derivations, centroids, automorphism groups, quasi-centroids, Rota-Baxter operators, Nijenhuis operators, averaging operators, Reynolds operators, quasi-derivations, and generalized derivations.
\end{abstract}

\maketitle \section{ Introduction}\label{introduction}
The classification of algebraic structures and their invariants is an important and ongoing research area in mathematics and physics (see, for example, \cite{mak,maz,su,zah,akk,ki,ga,me}). Classical settings include associative, Lie, and Jordan algebras. Specifically, an associative algebra $(\A, \mu)$ consists of a vector space $\A$ equipped with a bilinear map $\mu:\A\otimes\A\rightarrow\A$, denoted $\mu(a\otimes b) = a\ast b$, that satisfies the associativity condition
$$(a\ast b)\ast c = a\ast(b\ast c)$$ for all $a, b, c \in\A$. In \cite{rak}, the authors classify associative algebras over the complex field with dimension less than five. On a related note, invariant structures such as derivations and centroids have been studied for various classes of algebras in numerous works \cite{hrb,bas,fii,leg,basd}. The algebra of derivations, for example, is useful in algebraic and geometric classification problems.

Other invariants include special types of linear maps from an algebra to itself, such as Rota-Baxter operators or Nijenhuis operators.

In the present paper, we are concerned with \textit{compatible} associative algebras, which are characterized by a pair of associative-algebraic structures over a common vector space that interact in a nice way. Specifically, a compatible associative algebra is a triple $(\A, \mu_1, \mu_2)$ for which $(\A, \mu_1)$ and $(\A, \mu_2)$ are both associative algebras that satisfy the compatibility axiom
\[\mu_2\circ(\mu_1\otimes \id) + \mu_1\circ(\mu_2\otimes \id) = \mu_1\circ(\id\otimes \mu_2) + \mu_2\circ(\id\otimes \mu_1)\] where $\id$ denotes the identity map on $\A$.
Letting $\bullet$ and $\ast$ denote the multiplications $\mu_1$ and $\mu_2$ respectively, the axiom becomes $$(a\bullet b)\ast c + (a\ast b)\bullet c =a\bullet (b\ast c) +a\ast(b\bullet c)$$ for $a, b, c\in \A$. We note that this identity is symmetric with respect to $\bullet$ and $\ast$.

The paper is structured as follows. We first classify compatible associative algebras of dimension less than four. We then classify derivations, centroids, automorphisms, and quasi-centroids for all algebras in these dimensions. For a selection of cases, we classify further invariants such as Rota-Baxter operators, Nijenhuis operators, averaging operators, Reynolds operators, quasi-derivations, and generalized derivations. The computations for our classifications were done using Maple and Mathematica. Throughout, we work over the complex field.

 \section{ Compatible Associative Algebras}
Consider a pair of compatible associative algebras $\A=(\A, \mu_1, \mu_2)$ and  $\A'=(\A', \mu_1', \mu_2')$. A linear transformation $\psi:\A\rightarrow \A'$ is termed a homomorphism if it satisfies $\psi\circ\mu_i=\mu_i'\circ(\psi\otimes\psi)$ for $i=1,2$. An homomorphism becomes an isomorphism if it is bijective. Two compatible associative algebras are termed isomorphic if there exists an isomorphism between them. We now recall the classification of complex associative algebras of dimensions 2, 3 and 4 (obtained in \cite{rak}). Utilizing these results, we classify complex compatible associative algebras of the same dimensions, which are presented as pairs of associative algebras.

To begin, consider an automorphism $\theta$ on a finite-dimensional complex compatible associative algebra $\A=(\A,\mu_1,\mu_2)$ where multiplications are denoted by $\mu_1(a\otimes b) = a\bullet b$ and $\mu_2(a\otimes b) = a\ast b$ for $a,b\in\A$. 
An automorphism $\theta$ on $\A$ is an isomorphism $\theta:\A\rightarrow \A$. The set of all automorphisms on $\A$ forms a group with respect to the composition, denoted by $\Aut(\A)$.
Let $\{e_1,e_2,\dots, e_n\}$ be a basis for $\A$, and denote the multiplications on this basis as: \begin{align*}
    e_i\bullet e_j = \alpha_{ij}^1e_1 + \alpha_{ij}^2e_2 + \cdots + \alpha_{ij}^ne_n, && e_i\ast e_j = \beta_{ij}^1e_1 + \beta_{ij}^2e_2 + \cdots + \beta_{ij}^ne_n
\end{align*} where, $\alpha_{ij}^k,\beta_{ij}^k\in \C$ are structure constants. Suppose \[\theta(e_i) = \theta_i^1e_1 + \theta_i^2e_2 + \cdots + \theta_i^ne_n\] for coefficients $\theta_i^1,\theta_i^2,\dots,\theta_i^n\in \C$.
Then, the equalities \begin{align*}
    \theta(e_i\bullet e_j) = \theta(e_i)\bullet \theta(e_j), && \theta(e_i\ast e_j) = \theta(e_i)\ast \theta(e_j)
\end{align*} yield
\begin{align*}
   \sum^{n}_{k=1} \alpha_{ij}^{k}\theta_k^r = 
   \sum^{n}_{q=1} \sum^{n}_{p=1}\theta_i^p\theta_j^q\alpha_{pq}^{r}, && 
     \sum^{n}_{k=1} \beta_{ij}^{k}\theta_k^r = 
   \sum^{n}_{q=1} \sum^{n}_{p=1}\theta_i^p\theta_j^q\beta_{pq}^{r}
\end{align*} for $r=1,2,\dots,n$.
 Solving this system of equations provides a classification of automorphisms on $\A$ using computer algebra methods.
\begin{lem}
The description of the automorphisms of every 2-dimensional compatible associative algebra is given below.

\[
\text{Aut}(\mathcal{A}_{2}^{1}, \mathcal{A}_{2}^{4}) =
\left\{
\begin{pmatrix}
1 & 0 \\
0 & \theta
\end{pmatrix}
\;\middle|\; \theta \in \mathbb{C}^*
\right\}.
\]
\end{lem}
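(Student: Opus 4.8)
The plan is to specialize the general automorphism equations set up above to the single pair $(\mathcal{A}_2^1, \mathcal{A}_2^4)$ and to solve the resulting polynomial system directly. First I would record the structure constants $\alpha_{ij}^k$ of $\mathcal{A}_2^1$ (playing the role of $\bullet = \mu_1$) and $\beta_{ij}^k$ of $\mathcal{A}_2^4$ (playing the role of $\ast = \mu_2$) on the fixed basis $\{e_1,e_2\}$, reading them off from the classification of \cite{rak}. Writing a candidate automorphism as the matrix $\Theta = (\theta_i^j)$ with $\theta(e_i) = \theta_i^1 e_1 + \theta_i^2 e_2$, the two families of constraints
\[
\sum_{k=1}^2 \alpha_{ij}^k \theta_k^r = \sum_{p,q=1}^2 \theta_i^p \theta_j^q \alpha_{pq}^r, \qquad \sum_{k=1}^2 \beta_{ij}^k \theta_k^r = \sum_{p,q=1}^2 \theta_i^p \theta_j^q \beta_{pq}^r
\]
become a finite system of quadratic equations in the four unknowns $\theta_1^1, \theta_1^2, \theta_2^1, \theta_2^2$, with $r,i,j$ ranging over $\{1,2\}$.

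The key steps are then the following. First, expand both systems: the $\mu_1$-equations force the map to preserve $\bullet$ and typically pin down several entries (for instance, that $e_1$ is fixed and the off-diagonal entries vanish), while the $\mu_2$-equations impose the additional constraints coming from $\ast$. Second, combine the two sets of equations and eliminate variables, retaining only those solutions with $\det \Theta = \theta_1^1 \theta_2^2 - \theta_1^2 \theta_2^1 \neq 0$ so that $\theta$ is genuinely invertible. Third, read off the surviving one-parameter family. I expect the combined constraints to force $\theta_1^1 = 1$ and $\theta_1^2 = \theta_2^1 = 0$, leaving $\theta_2^2 = \theta$ free subject only to $\theta \neq 0$, which reproduces the asserted matrix form with $\theta \in \mathbb{C}^*$.

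The main obstacle is organizational rather than conceptual: a solution of the $\mu_1$-equations need not survive the $\mu_2$-equations, and conversely, so the delicate part is verifying that the two systems are genuinely consistent and that their common solution set collapses to exactly the stated family, with the nondegeneracy condition excluding the remaining degenerate matrices. Because the system is small, this can be carried out by hand or confirmed with the computer algebra methods mentioned above. Note that the compatibility axiom does not re-enter the computation directly; it has already been used to guarantee that $(\mathcal{A}_2^1, \mathcal{A}_2^4)$ is a legitimate compatible pair, after which an automorphism is required only to respect $\mu_1$ and $\mu_2$ separately.
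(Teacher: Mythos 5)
Your overall strategy --- specializing the structure-constant equations to the pair $(\mathcal{A}_2^1,\mathcal{A}_2^4)$ and solving the resulting quadratic system subject to invertibility --- is exactly the paper's method; the paper gives no more detail than this and delegates the solving to computer algebra. The problem is your third step, where you \emph{expect} the combined constraints to force $\theta_1^1=1$, $\theta_1^2=\theta_2^1=0$ while leaving $\theta_2^2$ free. That is the one step that cannot simply be asserted, and if you actually perform the elimination it fails. Since the only nonzero product of $\mathcal{A}_2^1$ is $e_1\bullet e_1=e_2$, the $\mu_1$-equations say $\theta(e_2)=\theta(e_1)\bullet\theta(e_1)=(\theta_1^1)^2e_2$, i.e.\ $\theta_2^1=0$ \emph{and} $\theta_2^2=(\theta_1^1)^2$; so the $\bullet$-system does not leave $\theta_2^2$ free (nor does it fix $e_1$, contrary to your parenthetical guess --- it constrains $\theta(e_2)$ instead). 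The $\mu_2$-equations coming from $e_1\ast e_1=e_1$, $e_1\ast e_2=e_2\ast e_1=e_2$, $e_2\ast e_2=0$ give $(\theta_1^1)^2=\theta_1^1$, $2\theta_1^1\theta_1^2=\theta_1^2$ and $\theta_2^1=0$, hence $\theta_1^1=1$ and $\theta_1^2=0$ once invertibility is imposed, with $\theta_2^2$ unconstrained by $\ast$.

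Combining the two systems therefore forces $\theta_2^2=(\theta_1^1)^2=1$: the common solution set is $\{\mathrm{id}\}$, not the one-parameter family $\mathrm{diag}(1,\theta)$, $\theta\in\mathbb{C}^*$. Indeed, $\mathrm{diag}(1,\theta)$ is an automorphism of $\mathcal{A}_2^4$ alone, but it sends $e_1\bullet e_1=e_2$ to $e_2$ while sending $e_2$ to $\theta e_2$, so it preserves $\bullet$ only when $\theta=1$. The family stated in the lemma is precisely $\mathrm{Aut}(\mathcal{A}_2^4)$, i.e.\ what one gets by dropping the $\bullet$-constraints; under the paper's own definition of an automorphism of a compatible pair (a bijection intertwining both $\mu_1$ and $\mu_2$) the joint solution set is trivial. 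So your proof as written does not establish the lemma: the decisive elimination is never carried out, and carrying it out does not ``reproduce the asserted matrix form'' --- it contradicts it. To close the argument you would have to either exhibit the full solution of the joint system (which collapses to the identity) or identify a different convention under which the lemma's claim is the quantity actually being computed.
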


\begin{thm}
The $2$-dimensional complex compatible associative algebras are given by the pairs

$\A_{2}^{1}$ :	
$\begin{array}{ll}
e_{1}\bullet e_{1}=e_{2},
\end{array}$
$ \A_{2}^{4}$ :	
$\begin{array}{ll}
e_{1}\ast e_{1}=e_{1},\,
 e_{1}\ast e_{2}=e_{2},\,
 e_{2}\ast e_{1}=e_{2}
\end{array}$
\end{thm}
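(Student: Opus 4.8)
The plan is to bootstrap from the known classification of $2$-dimensional complex associative algebras in \cite{rak}. Since a compatible structure $(\A,\mu_1,\mu_2)$ requires each of $(\A,\mu_1)$ and $(\A,\mu_2)$ to be associative, both multiplications must — up to a common change of basis — be realizations of algebras on that list. First I would fix a basis $\{e_1,e_2\}$ and, without loss of generality, normalize $\mu_1=\bullet$ to be one of the \cite{rak} representatives; the second multiplication $\mu_2=\ast$ is then an unknown associative structure on the same space, recorded through its structure constants $\beta_{ij}^k$. This turns the problem from a continuum of structure constants into a finite branching over the iso-type of $\mu_1$.

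Next I would impose on $\mu_2$ the two structural requirements at once: associativity of $\ast$, which is quadratic in the $\beta_{ij}^k$, and the compatibility axiom $(a\bullet b)\ast c+(a\ast b)\bullet c=a\bullet(b\ast c)+a\ast(b\bullet c)$, which — once $\bullet$ is fixed — is linear in the $\beta_{ij}^k$. Because the axiom is trilinear in its arguments, it suffices to test it on the eight basis triples $(e_i,e_j,e_k)$; its invariance under swapping $\bullet$ and $\ast$, noted in the introduction, further shows that the resulting list of compatible pairs is symmetric in the two multiplications, which I would exploit to reduce the branching. For the displayed pair the verification is immediate: on $(e_1,e_1,e_1)$ the left side is $(e_1\bullet e_1)\ast e_1+(e_1\ast e_1)\bullet e_1=e_2\ast e_1+e_1\bullet e_1=2e_2$ and the right side is $e_1\bullet(e_1\ast e_1)+e_1\ast(e_1\bullet e_1)=e_1\bullet e_1+e_1\ast e_2=2e_2$, with the remaining triples checked the same way, so $(\A_2^1,\A_2^4)$ is genuinely compatible.

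I would then pass to isomorphism classes, where the relevant equivalence is the simultaneous one: an invertible $\theta=(\theta_i^j)$ identifies $(\A,\mu_1,\mu_2)$ with $(\A,\mu_1',\mu_2')$ precisely when it satisfies both intertwining systems displayed before the statement at the same time. After fixing $\mu_1$ I would quotient the solution set for $\mu_2$ by the stabilizer $\Aut(\A,\mu_1)$, choose representatives, and separate them using invariants preserved by this action — nilpotency or the presence of a unit or idempotent for each multiplication, the ranks of the product maps, and so on — so as to guarantee that the representatives are pairwise non-isomorphic.

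The step I expect to be the main obstacle is this last isomorphism bookkeeping, rather than the compatibility check, which is a routine finite computation of exactly the kind the authors delegate to Maple and Mathematica. The difficulty is twofold: proving completeness, that every compatible pair assembled from the \cite{rak} data is equivalent to a listed representative, and proving irredundancy, that distinct representatives are genuinely non-isomorphic. Because the simultaneous intertwining condition couples the two multiplications, a change of basis chosen to normalize $\bullet$ will in general distort $\ast$, and keeping track of this interaction across all branches is where the real care is required.
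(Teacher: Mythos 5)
Your proposal is correct in outline, and its computational core is the same as the paper's --- turn the defining identities into polynomial equations on structure constants and solve them with computer algebra, using the classification of \cite{rak} as scaffolding --- but you organize the argument in a genuinely different and tighter way. The paper's proof keeps the structure constants $\alpha_{ij}^r$ and $\beta_{ij}^r$ of \emph{both} multiplications as simultaneous unknowns, writes out the full system coming from the two associativity identities together with the compatibility identity, and simply declares that this system is solved by computer algebra; the reduction modulo isomorphism (which basis changes are permitted, why the listed pair exhausts all solutions up to isomorphism, and why distinct representatives are non-isomorphic) is never addressed in the proof itself. You instead fix $\mu_1$ in a \cite{rak} normal form, which cuts the problem into finitely many branches, makes the compatibility constraints linear in the unknowns $\beta_{ij}^k$ (each term of the identity contains exactly one $\bullet$ and one $\ast$), and lets you pose the classification correctly as an orbit problem for the stabilizer $\Aut(\A,\mu_1)$ acting on the solution set, with the $\bullet\leftrightarrow\ast$ symmetry roughly halving the branching since each component's isomorphism type is separately an invariant of the pair. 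What your route buys is exactly the rigor the paper's two-line proof lacks: completeness and irredundancy become provable statements about group orbits rather than an unstated by-product of a computer run; what the paper's route buys is uniformity (one system, no case analysis, no prior knowledge of automorphism groups needed). Your hand verification of compatibility for $(\A_2^1,\A_2^4)$ is correct. The only caveat is the one you flag yourself: the orbit analysis is left unexecuted, so as written this is a proof plan rather than a proof --- though in dimension $2$ the stabilizer computation in each branch is a short, entirely feasible hand calculation, and carrying it out would make your argument strictly more complete than the one in the paper.
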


\begin{proof}
    Let $\{e_1,e_2,\dots,e_n\}$ be a basis for a compatible associative algebra $\A$. Let \begin{align*}
        e_i\bullet e_j = \sum_{r=1}^n\alpha_{ij}^re_r, && e_i\ast e_j = \sum_{r=1}^n\beta_{ij}^re_r
    \end{align*} denote the structure constants on $\A$ for $i,j=1,2,\dots,n$. The identities \begin{align*}
        (a\bullet b)\ast c + (a\ast b)\bullet c &=a\bullet (b\ast c) +a\ast(b\bullet c), \\
        (a\bullet b)\bullet c &= a\bullet(b\bullet c)\\ (a\ast b)\ast c &= a\ast(b\ast c)
    \end{align*} yield a system of $n^3$ equations \[\sum_{r=1}^n\left(\alpha_{ij}^r\beta_{rk}^q + \beta_{ij}^r\alpha_{rk}^q\right) = \sum_{r=1}^n\left(\beta_{jk}^r\alpha_{ir}^q + \alpha_{jk}^r\beta_{ir}^q\right)\] \[\sum_{r=1}^n\alpha_{ij}^r\alpha_{rk}^q = \sum_{r=1}^n\alpha_{jk}^r\alpha_{ir}^q\]
    \[\sum_{r=1}^n\beta_{ij}^r\beta_{rk}^q = \sum_{r=1}^n\beta_{jk}^r\beta_{ir}^q\] for $i,j,k,q=1,2,\dots,n$ that we solve using computer algebra.
\end{proof}

\begin{lem}
The description of the automorphisms of every 3-dimensional compatible associative dialgebra is given below.
\begin{align*}
\text{Aut}(\mathcal{A}_3^1, \mathcal{A}_3^2) &= 
\left\{
$
\end{enumerate}
\end{thm}

\section{ Algebraic Invariants}
The aim of this section is to classify a selection of invariants for compatible associative algebras that are analogous to well-known operators from other algebraic settings. Most of these take the form of linear operators that are characterized by certain identities. In particular, let $(\A,\mu_1,\mu_2)$ be a compatible associative algebra with multiplications denoted by $\mu_1(a\otimes b) = a\bullet b$ and $\mu_2(a\otimes b) = a\ast b$ for $a,b\in \A$. In the following definitions, we use  $\bullet$ and $\ast$ and work with elements $a,b\in \A$.

\begin{enumerate}
    \item A derivation is a linear map $d:\A\rightarrow \A$ that satisfies\\
    $d(a\bullet b) = d(a)\bullet b + a\bullet d(b)$ and $d(a\ast b) = d(a)\ast b + a\ast d(b)$  
    
	\item A linear map $d:\A\rightarrow \A$ is called a quasi-derivation if there exists another linear map $d':\A\rightarrow \A$ such that $d'(a\bullet b) = d(a)\bullet b + a\bullet d(b)$ and $d'(a\ast b) = d(a)\ast b + a\ast d(b)$

 \item A linear map $d:\A\rightarrow \A$ is called a generalized derivation if there exist linear maps $d',d'':\A\rightarrow \A$ such that $d''(a\bullet b) = d(a)\bullet b + a\bullet d'(b)$ and $d''(a\ast b) = d(a)\ast b + a\ast d'(b)$

    \item A centroid is a linear map $\eta :\A\xrightarrow{} \A$ that satisfies\\ $\eta(a\bullet b) = \eta(a)\bullet b = a\bullet \eta(b)$ and $\eta(a\ast b) = \eta(a)\ast b = a\ast \eta(b)$

    \item A quasi-centroid is a linear map $\delta:\A\xrightarrow{} \A$ that satisfies $\delta(a)\bullet b = a\bullet \delta(b)$ and $\delta(a)\ast b = a\ast \delta(b)$ 

    \item A Rota-Baxter operator is a linear map $R:\A\xrightarrow{} \A$ that satisfies\\ $R(a)\bullet R(b) = R(R(a)\bullet b + a\bullet R(b))$ and  $R(a)\ast R(b) = R(R(a)\ast b + a\ast R(b))$

    \item A Nijenhuis operator is a linear map $N:\A\xrightarrow{} \A$ such that\\ $N(a)\bullet N(b) = N(N(a)\bullet b)+a\bullet N(b)-N(a\bullet b))$ and  $N(a)\ast N(b) = N(N(a)\ast b)+a\ast N(b)-N(a\ast b))$

    \item An averaging operator is a linear map $\chi:\A\xrightarrow{} \A$ such that\\
    $\chi(\chi(a)\bullet b)=\chi(a)\bullet \chi(b)=\chi(a\bullet \chi(b)))$ and  
    $\chi(\chi(a)\ast b)=\chi(a)\ast \chi(b)=\chi(a\ast \chi(b)))$

     \item A Reynolds operator is a linear map $\xi:\A\xrightarrow{} \A$ such that\\ 
		$\xi(a\bullet b)  =\xi(\xi(a)\bullet b+a\bullet \xi(b)-\xi(a)\bullet \xi(b))$ and $\xi(a\ast b)  =\xi(\xi(a)\ast  b+a\ast \xi(b)-\xi(a)\ast \xi(b))$
\end{enumerate}
We now describe the process for classifying these invariants.

 We use a similar process to classify the other invariants. As another example, let $d$ be a derivation on $\A$ with \[d(e_i) = d_{i}^1e_1 + d_{i}^2e_2 + \cdots + d_{i}^ne_n\] for $d_i^1,d_i^2,\dots,d_i^n\in\C$. The equalities \begin{align*}
    d(e_i\bullet e_j) = d(e_i)\bullet e_j + e_i\bullet d(e_j), && d(e_i\ast e_j) = d(e_i)\ast e_j + e_i\ast d(e_j)
\end{align*} yield
\begin{align*}
    \sum_{k=1}^n \alpha_{ij}^{k}d_k^r = \sum_{k=1}^n \left( d_i^k\alpha_{kj}^{r} +d_j^k\alpha_{ik}^r \right)\!, && \sum_{k=1}^n \beta_{ij}^{k}d_k^r = \sum_{k=1}^n \left( d_i^k\beta_{kj}^{r} +d_j^k\beta_{ik}^r \right)
\end{align*} for $i,j,r=1,2,\dots,n$.

We use a similar process to classify the other invariants. Let $\eta$ be a centroid on $\A$ with \[\eta(e_i) = \eta_{i}^1e_1 + \eta_{i}^2e_2 + \cdots + \eta_{i}^ne_n\] for $\eta_i^1,\eta_i^2,\dots,\eta_i^n\in\C$. The equalities \begin{align*}
    \eta(e_i\bullet e_j) = \eta(e_i)\bullet e_j = e_i\bullet \eta(e_j), && \eta(e_i\ast e_j) = \eta(e_i)\ast e_j = e_i\ast \eta(e_j)
\end{align*} yield
\begin{align*}
    \sum_{k=1}^n \alpha_{ij}^{k}\eta_k^r = \sum_{k=1}^n \eta_i^k\alpha_{kj}^{r} = \sum_{k=1}^n \eta_j^k\alpha_{ik}^r, && \sum_{k=1}^n\beta_{ij}^{k}\eta_k^r = \sum_{k=1}^n \eta_i^k\beta_{kj}^{r} = \sum_{k=1}^n \eta_j^k\beta_{ik}^r
\end{align*} for $i,j,r=1,2,\dots,n$.

We use a similar process to classify the other invariants. Let $\delta$ be a quasi-centroid on $\A$ with \[ \delta(e_i) =  \delta_{i}^1e_1 +  \delta_{i}^2e_2 + \cdots +  \delta_{i}^ne_n\] for $ \delta_i^1, \delta_i^2,\dots, \delta_i^n\in\C$. The equalities
\begin{align*}
\delta(e_i)\bullet e_j = e_i\bullet \delta(e_j), && \delta(e_i)\ast e_j = e_i\ast  \delta(e_j)
\end{align*} yield
\begin{align*}
 \sum_{k=1}^n\delta_i^k\alpha_{kj}^{r} = \sum_{k=1}^n\delta_j^k\alpha_{ik}^r, && \sum_{k=1}^n \delta_i^k\beta_{kj}^{r} = \sum_{k=1}^n\delta_j^k\beta_{ik}^r
\end{align*} for $i,j,r=1,2,\dots,n$.

We use a similar process to classify the other invariants. Let $R$ be a Rota-Baxter operator on $\A$ with \[R(e_i) = R_{i}^1e_1 + R_{i}^2e_2 + \cdots +R_{i}^ne_n\] for $ R_i^1, R_i^2,\dots, R_i^n\in\C$. The equalities
\begin{align*}
R(e_i)\bullet R(e_j) = R(R(e_i)\bullet e_j + e_i\bullet R(e_j)), && R(e_i)\ast R(e_j) = R(R(e_i)\ast e_j + e_i\ast R(e_j))
\end{align*} yield
\begin{align*}
 \sum_{k=1}^n\sum_{p=1}^nR_i^kR_j^p\alpha_{kp}^{q} = \sum_{k=1}^n\sum_{p=1}^n\Bigg(R_i^k\alpha_{kj}^pR_p^q+R_j^k\alpha_{ik}^pR_p^q\Bigg), &&  \sum_{k=1}^n\sum_{p=1}^nR_i^kR_j^p\beta_{kp}^{q} = \sum_{k=1}^n\sum_{p=1}^n\Bigg(R_i^k\beta_{kj}^pR_p^q+R_j^k\beta_{ik}^pR_p^q\Bigg)
\end{align*} for $i,j,q=1,2,\dots,n$.

We use a similar process to classify the other invariants. Let $N$ be a Nijenhuis operator on $\A$ with \[N(e_i) = N_{i}^1e_1 + N_{i}^2e_2 + \cdots +N_{i}^ne_n\] for $ N_i^1, N_i^2,\dots, N_i^n\in\C$. The equalities
\begin{align*}
N(e_i)\bullet N(e_j) = N(N(e_i)\bullet e_j + e_i\bullet N(e_j)-N(e_i\bullet e_j))
\end{align*}
\begin{align*}
N(e_i)\ast N(e_j) = N(N(e_i)\ast e_j + e_i\ast N(e_j)-N(e_i\ast e_j))
\end{align*}
yield
\begin{align*}
 \sum_{k=1}^n\sum_{p=1}^nN_i^kN_j^p\alpha_{kp}^{q} = \sum_{k=1}^n\sum_{p=1}^n\Bigg(N_i^k\alpha_{kj}^pN_p^q+N_j^k\alpha_{ik}^pN_p^q-\alpha_{ij}^kN_k^pN_p^q\Bigg), 
\end{align*}
\begin{align*}
 \sum_{k=1}^n\sum_{p=1}^nN_i^kN_j^p\beta_{kp}^{q} = \sum_{k=1}^n\sum_{p=1}^n\Bigg(N_i^k\beta_{kj}^pN_p^q+N_j^k\beta_{ik}^pN_p^q-\beta_{ij}^kN_k^pN_p^q\Bigg), 
\end{align*}
for $i,j,q=1,2,\dots,n$.

We use a similar process to classify the other invariants. Let $\xi$ be a averaging operator on $\A$ with \[\chi(e_i) =\chi_{i}^1e_1 +\chi_{i}^2e_2 + \cdots +\chi_{i}^ne_n\] for $ \chi_i^1, \chi_i^2,\dots, \chi_i^n\in\C$. The equalities
\begin{align*}
\chi(\chi(e_i)\bullet e_j)=\chi(e_i)\bullet \chi(e_j)=\chi(e_i\bullet \chi(e_j)))
\end{align*}
\begin{align*}
\chi(\chi(e_i)\ast e_j)=\chi(e_i)\ast \chi(e_j)=\chi(e_i\ast \chi(e_j)))
\end{align*}
yield
\begin{align*}
 \sum_{k=1}^n\sum_{p=1}^n\chi_i^k\alpha_{kj}^{p}\chi_p^q = \sum_{k=1}^n\sum_{p=1}^n\chi_i^k\chi_j^p\alpha_{kp}^q=\sum_{k=1}^n\sum_{p=1}^n\chi_j^k\alpha_{ik}^p\chi_p^q, 
\end{align*}
\begin{align*}
 \sum_{k=1}^n\sum_{p=1}^n\chi_i^k\beta_{kj}^{p}\chi_p^q = \sum_{k=1}^n\sum_{p=1}^n\chi_i^k\chi_j^p\beta_{kp}^q=\sum_{k=1}^n\sum_{p=1}^n\chi_j^k\beta_{ik}^p\chi_p^q, 
\end{align*}
for $i,j,q=1,2,\dots,n$.

 Let $\xi$ be a averaging operator on $\A$ with \[\xi(e_i) =\xi_{i}^1e_1 +\xi_{i}^2e_2 + \cdots +\xi_{i}^ne_n\] for $\xi_i^1, \xi_i^2,\dots, \xi_i^n\in\C$. The equalities
\begin{align*}
\xi(e_i\bullet e_j)=\xi(\xi(e_i)\bullet e_j+e_i\bullet \xi(e_j)-\xi(\xi(e_i)\bullet \xi(e_j))
\end{align*}
\begin{align*}
\xi(e_i\ast e_j)=\xi(\xi(e_i)\ast e_j+e_i\ast \xi(e_j)-\xi(\xi(e_i)\ast \xi(e_j))
\end{align*}
yield
\begin{align*}
 \sum_{q=1}^n\alpha_{ij}^{q}\xi_q^r- \sum_{q=1}^n\sum_{p=1}^n\xi_i^p\alpha_{pj}^q\xi_q^r-\sum_{q=1}^n\sum_{p=1}^n\xi_j^p\alpha_{ip}^q\xi_q^r+\sum_{q=1}^n\sum_{p=1}^n\sum_{k=1}^n\xi_i^k\xi_j^p\alpha_{kp}^q\xi_q^r, 
\end{align*}
\begin{align*}
 \sum_{q=1}^n\beta_{ij}^{q}\xi_q^r- \sum_{q=1}^n\sum_{p=1}^n\xi_i^p\beta_{pj}^q\xi_q^r-\sum_{q=1}^n\sum_{p=1}^n\xi_j^p\beta_{ip}^q\xi_q^r+\sum_{q=1}^n\sum_{p=1}^n\sum_{k=1}^n\xi_i^k\xi_j^p\beta_{kp}^q\xi_q^r, 
\end{align*}
for $i,j,q=1,2,\dots,n$.

For the remaining operators, we solve similar systems of equations and present our results using the following structure constants.

For the linear maps $d'$ and $d''$ that appear in the definitions of quasi-derivations and generalized derivations, we will use the notations \begin{align*}
    d'(e_i) = {d'}_i^1e_1 + {d'}_i^2e_2 + \cdots + {d'}_i^ne_n, && d''(e_i) = {d''}_i^1e_1 + {d''}_i^2e_2 + \cdots + {d''}_i^ne_n
\end{align*} for their structure constants.

\begin{prop}
The following matrix forms characterize the derivations, centroids, quasi-centroids, Rota-Baxter operators, Nijenhuis operators, averaging operators, Reynolds operators, quasi-derivations, and generalized derivations (respectively) of  $2$-dimensional compatible associative algebras.

$(\A^{1}_2,\A^{4}_2)$ :
$\left(

\right)$
\end{enumerate}
\end{prop}

\section*{Conclusion}
In this paper, we classified compatible pairs of complex associative algebras with dimension less than four. For each type of algebra, we calculated and studied various algebraic properties, such as derivations, centroids, automorphism groups, and different linear operators like Rota–Baxter, Nijenhuis, averaging, and Reynolds operators. We also looked at quasi-derivations and generalized derivations to better understand the symmetries and structure of these algebras. These results not only improve our understanding of compatible associative algebras but also provide helpful tools for future studies in deformation theory, operad theory, and noncommutative geometry.

\textbf{Data Availability:}
No data were used to support this study.

\textbf{Conflict of Interests:}  
The authors declare that they have no conficts of interest.  

\textbf{Acknowledgment:}  
We thank the referee for the helpful comments and suggestions that contributed to improving this paper.

\end{document}